\documentclass[12pt]{article}  
\usepackage{amsmath}
\usepackage{amssymb}
\usepackage{theorem}
\usepackage{euscript}
\usepackage{pstricks}
\usepackage{authblk}
\usepackage{verbatim}
\usepackage{graphics}
\usepackage{graphicx}
\topmargin -1.2cm
\oddsidemargin -0.0cm
\textwidth  16cm 
\headheight 0.0cm
\textheight 23cm
\parindent  6mm
\parskip    6pt
\tolerance  1000

\usepackage{color}

\usepackage{hyperref}
\hypersetup
{
	colorlinks,
	citecolor=black,
	filecolor=black,
	linkcolor=blue,
	urlcolor=blue
}
\hypersetup{linktocpage}
\newtheorem{theorem}{Theorem}[section]
\newtheorem{lemma}[theorem]{Lemma}

\numberwithin{equation}{section}


\theoremstyle{definition}
 
\theoremstyle{remark}
\newtheorem{remark}[theorem]{Remark}

\newcommand{\brac}[1]{\left(#1\right)}

\newcommand{\brab}[1]{\left\{#1\right\}}


\newcommand{\bg}{{\boldsymbol{g}}}

\newcommand{\bk}{{\boldsymbol{k}}}

\newcommand{\bs}{{\boldsymbol{s}}}

\newcommand{\bx}{{\boldsymbol{x}}}

\newcommand{\bF}{{\boldsymbol{F}}}
\newcommand{\bX}{{\boldsymbol{X}}}
\newcommand{\bw}{{\boldsymbol{w}}}
\newcommand{\by}{{\boldsymbol{y}}}
\newcommand{\bv}{{\boldsymbol{v}}}

\newcommand{\bW}{{\boldsymbol{W}}}

\newcommand{\bone}{{\boldsymbol{1}}}

\newcommand{\bxi}{{\boldsymbol{\xi}}}
\newcommand{\rd}{{\rm d}} 
\newcommand{\Int}{{\rm Int}}


\def\ZZd{{\mathbb Z}^d}

\def\RR{{\mathbb R}}
\def\RRd{{\mathbb R}^d}
\def\RRdp{{\mathbb R}^d_+}
\def\NN{{\mathbb N}}
\def\NNd{{\NN}^d}

\def\NN{{\mathbb N}}
\def\RR{{\mathbb R}}

\def\UU{{\mathbb U}}

\def\NNd{{\mathbb N}^d}
\def\RRd{{\mathbb R}^d}

\def\UUd{{\mathbb U}^d}

\def\ZZd{{\mathbb Z}^d}

\def\RRdp{{\mathbb R}^\infty_+}



\def\Qq{{\mathcal Q}}


\def\NN{{\mathbb N}}
\def\RR{{\mathbb R}}


\def\NNd{{\mathbb N}^d}
\def\RRd{{\mathbb R}^d}

\def\RRp{{\mathbb R}_+}
\def\RRdp{{\mathbb R}^d_+}





\newcommand{\norm}[2]{\left\|{#1}\right\|_{#2}}


%

\title{\sffamily Laguerre- and Laplace-weighted integration of mixed-smoothness functions }

\author[a]{Dinh D\~ung}
\affil[a]{Information Technology Institute, Vietnam National University, Hanoi
	\protect\\
	144 Xuan Thuy, Cau Giay, Hanoi, Vietnam
	\protect\\
	Email: dinhzung@gmail.com}

\date{\today}
 \tolerance 2500
\begin{document}
\maketitle

\begin{abstract}
 We investigate the  approximation of generalized Laguerre- or Laplace-weighted integrals over  $\mathbb{R}^d_+$ or  $\mathbb{R}^d$  of functions from generalized Laguerre- or Laplace-weighted  Sobolev spaces of mixed smoothness, respectively. We prove upper and lower bounds  of the convergence rate of optimal quadratures with respect to $n$ integration nodes for functions from  these spaces.  The upper bound is performed by sparse-grid quadratures with integration nodes on step hyperbolic corners or  hyperbolic crosses in the function domain $\mathbb{R}^d_+$ or  $\mathbb{R}^d$, respectively.
	
	\medskip
	\noindent
	{\bf Keywords and Phrases}:   Numerical multivariate weighted integration; Quadrature; Weighted Sobolev space of mixed smoothness; Step hyperbolic crosses of integration nodes; Convergence rate. 
	
	\medskip
	\noindent
	{\bf MSC (2020)}:   65D30; 65D32; 41A25; 41A55.
	
\end{abstract}

\section{Introduction}
\label{Introduction}

We first introduce generalized Laguerre- and Laplace-weighted  Sobolev spaces of  mixed smoothness.  Let $\UUd$ be either $\RRd$ or $\RRdp$.
Let 
\begin{equation} \label{bw(bx)}
	\bw_r(\bx):= \prod_{i=1}^d w_r(x_i), \ \ \bx \in \UUd,
\end{equation}
where
\begin{equation} \label{w(x)}
	w_r(x):= 	|x|^{\alpha + r/2}\exp (- a|x|+ b), \ \ \alpha > -1, \ \ r \in \NN_0, \ \ a > 0, \ \ b \in \UU,
\end{equation} 
is a univariate generalized Laguerre-Laplace-weight. 	The formula \eqref{w(x)} combines two different weights defined on different domains: the generalized Laguerre-weight $x^{\alpha + r/2}\exp (- a x + b)$ on $\RRp$, and the generalized Laplace-weight $|x|^{\alpha + r/2}\exp (- a|x|+ b)$ on $\RR$.
However, they are connected by approximation and integration properties as in particular, shown in the present paper.
In what follows, for simplicity of presentation, 
without  loss of generality we  assume $a=1$ and $b=0$, and
fix the weight $w$ and hence the  parameters $\alpha$. The parameter $r$ is associated with the definition of weighted Sobolev space  $W^r_{p,\bw_r}(\Omega)$ of mixed smoothness $r$ which is introduced below. We also omit $r$ in the notation if $r=0$, i.e., $w:=w_0$ and $\bw:=\bw_0$.

Let  $1\leq p<\infty$ and $\Omega$ be a Lebesgue measurable set on $\UUd$. 
We denote by  $L_{p,\bw_r}(\Omega)$ the weighted space  of all functions $f$ on $\Omega$ such that the norm
\begin{align} \label{L-Omega}
\|f\|_{L_{p,\bw_r}(\Omega)} : = 
\bigg( \int_\Omega |f(\bx)\bw_r(\bx)|^p  \rd \bx\bigg)^{1/p} 
\end{align}
is finite. For $r \in \NN$, we define the weighted  Sobolev space $W^r_{p,\bw_r}(\Omega)$ of mixed smoothness $r$  as the normed space of all functions $f\in L_{p,\bw_r}(\Omega)$ such that the weak (generalized) partial derivative $D^{\bk} f$ belongs to $L_{p,\bw_r}(\Omega)$ for  every $\bk \in \NNd_0$ satisfying the inequality $|\bk|_\infty \le r$. The norm of a  function $f$ in this space is defined by
\begin{align} \label{W-Omega}
	\|f\|_{W^r_{p,\bw_r}(\Omega)}
	: =  \
	\Bigg(\sum_{|\bk|_\infty \le r} \|D^{\bk} f\|_{L_{p,\bw_r}(\Omega)}\Bigg)^{1/p}.
\end{align}
It is useful to notice that any function $f \in W^r_{p,\bw_r}(\UUd)$ is equivalent  in the sense of the Lesbegue measure to a continuous (not necessarily bounded) function on $\UUd$. Hence throughout the present paper,  we always assume that the functions $f \in W^r_{p,\bw_r}(\UUd)$ are  continuous. We need this assumption for  well-defined  quadratures for functions $f \in W^r_{p,\bw_r}(\UUd)$. 


In the present paper, we are interested in approximation of   weighted integrals 
\begin{equation} \label{If}
\int_{\UUd} f(\bx) \bw(\bx) \, \rd\bx 
\end{equation}
for functions $f$ lying  in the space 
$W^r_{1,\bw_r}(\UUd)$.
To approximate them we use quadratures  of the form
\begin{equation} \label{Q_nf-introduction}
	Q_kf: = \sum_{i=1}^k \lambda_i f(\bx_i), 
\end{equation}
where $\bx_1,\ldots,\bx_k \in \UUd$  are  the integration nodes and $\lambda_1,\ldots,\lambda_k$ the integration weights.  For convenience, we assume that some of the integration nodes may coincide. 

Let $\bF$ be a set of continuous functions on $\UUd$.  Denote by $\Qq_n$  the family of all quadratures $Q_k$ of the form \eqref{Q_nf-introduction} with $k \le n$. The optimality  of  quadratures from $\Qq_n$ for  $f \in \bF$  is measured by 
\begin{equation} \label{Int_n}
\Int_n(\bF) :=\inf_{Q_n \in \Qq_n} \ \sup_{f\in \bF} 
\bigg|\int_{\UUd} f(\bx) \bw(\bx) \, \rd\bx - Q_nf\bigg|.
\end{equation}

Throughout this paper, we use $\bW^r_{p,\bw_r}(\Omega)$  to denote the unit ball in the space $W^r_{p,\bw_r}(\Omega)$.

The aim of  the present paper is to construct  sparse-grid quadratures  for the approximation of   
Laguerre- or Laplace-weighted integrals over $\UUd$ of 
functions belonging to the weighted Sobolev spaces $W^r_{1,\bw_r}(\UUd)$ of mixed smoothness $r$.
We construct sparse-grid quadratures which give upper bounds for convergence rate of  best quadrature 
quantified by $\Int_n(\bW^r_{1,\bw_r}(\UUd))$. 

The problem of numerical weighted integration considered in the present paper is  related to  the research direction of optimal approximation and integration for functions having mixed smoothness on one hand, and the other research  direction of univariate weighted  approximation and integration on $\RR$, on the other hand.  For survey and bibliography, we refer the reader to the books  \cite{DTU18B,Tem18B} on the first direction, and \cite{Mha1996B,Lu07B,JMN2021} on the second one. The present paper can be considered as a continuation of the papers \cite{DD2023,DK2022,DD2025a} on numerical weighted integration of mixed-smoothness functions.
In particular,  in \cite{DD2023}, we have obtained  upper and lower bounds for the quantity $\Int_n(\bW^r_{1,\bv}(\RRd))$  of best quadrature of the integral 

The numerical problem of weighted integration addressed in this study connects two research directions. On one hand, it relates to optimal approximation and integration for functions with mixed smoothness. On the other hand, it aligns with the theory of univariate weighted approximation and integration on $\RR$.
 For a survey and bibliography, the reader is referred to the monographs \cite{DTU18B, Tem18B} for the former direction and to \cite{Mha1996B, Lu07B, JMN2021} for the latter. This work can be viewed as a continuation of the investigations reported in \cite{DD2023, DK2022, DD2025a} on numerical weighted integration of functions with mixed smoothness.

In particular, in \cite{DD2023} we established both upper and lower bounds for the quantity
$\Int_n(\bW^r_{1,\bv}(\RRd))$ representing the best 
 quadrature error for the integral
\begin{equation} \label{If-G}
	\int_{\RRd} f(\bx) \bv(\bx) \, \rd\bx 
\end{equation}
on the unit ball $\bW^r_{1,\bv}(\RRd)$ of the  Freud-type weighted Sobolev space $W^r_{1,\bv}(\RRd)$ of mixed smoothness $r$,
where
\begin{equation} \label{bv(bx)}
	\bv(\bx) := \prod_{i=1}^d v(x_i),
\ \ \
	v(x):= 	\exp (- a|x|^\lambda + b), \ \ \lambda > 1, \ \ a > 0, \ \ b \in \RR,
\end{equation}
and the space $W^r_{1,\bv}(\RRd)$ is defined in the manner of  \eqref{L-Omega}--\eqref{W-Omega} by replacing $\bw_r$ with $\bv$. However, the methodology employed in \cite{DD2023} to derive the upper bound is not suitable for the limited case $\lambda =1$ in which  the Freud-type weight $\bv$ reduces to the Laplace-weight.  In the present work, we address this gap and provide the corresponding bound for this special case.

The main results of the present paper are read as follows.
For  the set $\bW^r_{1,\bw_r}(\UUd)$, we prove the upper and  lower bounds for  convergence rate of optimal integration as
\begin{equation}\label{Int_n(W^r_1)}
n^{-3r/4} (\log n)^{3r(d-1)/4}
	\ll	
	\Int_n(\bW^r_{1,\bw_r}(\UUd)) 
	\ll 
	n^{-r/2} (\log n)^{(r/2 + 1)(d-1)}.
\end{equation}
The Smolyak  quadratures performing this upper bound  are constructed  on  sparse grids of  integration nodes  which form \emph{a step hyperbolic corner} for the generalized Laguerre-weighted integration or \emph{a step hyperbolic cross} for the generalized Laplace-weighted integration in the function domain $\UUd$. This development of the construction of sparse grids and the associated quadratures  was proposed first  by the author \cite{DD2023} 
for numerical Freud-type weighted integration of functions having weighted mixed smoothness.  It is remarkable to notice that these sparse grids are completely different from  classical hyperbolic crosses in  frequency domains and Smolyak sparse grids   in compact function domains. To prove the lower bound in  \eqref{Int_n(W^r_1)} we adopt a traditional technique to construct for arbitrary $n$ integration nodes a fooling function vanishing at these nodes.

The Smolyak quadratures that yield the upper bound are built on sparse grids of integration nodes that form a step hyperbolic corner for generalized Laguerre-weighted integration, or a step hyperbolic cross for generalized Laplace-weighted integration, within the function domain in the function domain $\UUd$.
 The construction of these sparse grids and the associated quadratures was first proposed by the author in \cite{DD2023} for numerical Freud-type weighted integration of functions with weighted mixed smoothness. Notably, these sparse grids differ radically from classical hyperbolic crosses in the frequency domain and from Smolyak sparse grids in compact domains. To prove the lower bound in  \eqref{Int_n(W^r_1)} we employ a traditional technique to construct for arbitrary $n$ integration nodes a fooling function vanishing at all of these nodes.

It is  informative to compare the results  \eqref{Int_n(W^r_1)} in the case when $\UUd=\RRd$ and $\alpha = 0$ in  the weight $\bw_r$, with the results obtained in \cite{DD2023}, on numerical Gaussian-weighted  integration of the integral
\eqref{If-G}
of functions $f$ belonging to Gaussian-weighted Sobolev space $W^r_{1,\bg}(\RRd)$ of mixed smoothness $r$,
where 
$\bg(\bx) = (2\pi)^{-d/2}\exp (- |\bx|_2^2/2)$
is the  density of the $d$-dimensional standard Gaussian measure.
For the convergence rate of $\Int_n(\bW^r_{1,\bg}(\RRd))$  of best quadrature of this integral on the unit ball $\bW^r_{1,\bg}(\RRd)$, it has been proven in \cite{DD2023} that there hold the upper and lower bounds
\begin{equation}\label{Int_n(W^r_1)-tau=2}
	n^{-r/2} (\log n)^{r(d-1)/2} 
	\ll	
	\Int_n(\bW^r_{1,\bg}(\RRd))
	\ll 
	n^{-r/2} (\log n)^{(r/2 + 1)(d-1)}.
\end{equation}
At first glance, the upper bound in \eqref{Int_n(W^r_1)-tau=2} bears similarity to that in  \eqref{Int_n(W^r_1)}. However, a crucial difference lies in the definitions of the spaces  $W^r_{1,\bw_r}(\RRd)$ and $W^r_{1,\bg}(\RRd)$. 
The weight $\bw_r$
is obtained from the weight $\bw$ by multiplying by  the factor $\prod_{i=1}^d |x_i|^{r/2}$ which explicitly depends on the mixed smoothness~$r$.
 This structural distinction explains why the  upper bounds for $\Int_n(\bW^r_{1,\bw_r}(\RRd))$ and $\Int_n(\bW^r_{1,\bg}(\RRd))$ may coincide. The lower bound and upper bound for convergence rate diverge in both polynomial and logarithm terms in \eqref{Int_n(W^r_1)}, while they  coincide in the main polynomial term in \eqref{Int_n(W^r_1)-tau=2}. Notably, in both frameworks the exact convergence rate of the optimal weighted integration remains an open problem.

Let us briefly describe the remaining sections.  In Section \ref{Weighted integration in one dimension}, we prove upper and lower bounds
 for $\Int_n(\bW^r_{1,w_r}(\UU))$ and construct  quadratures which perform the upper bound. In Section \ref{Weighted integration in high dimension}, we prove upper and lower bounds of $\Int_n(\bW^r_{1,\bw_r}(\UUd))$ for $d \ge 2$, and construct quadratures which give the upper bound.

\medskip
\noindent
{\bf Notation.} 
 Denote: $\bone:=(1,1,...,1) \in \RRd$, for $\bx \in \RRd$, $\bx=:\brac{x_1,...,x_d}$,
$|\bx|_\infty:= \max_{1\le j \le d} |x_j|$ and $|\bx|_2:= \brac{\sum_{j=1}^d |x_j|^2}^{1/2}$.  For $\bx, \by \in \RRd$, the inequality $\bx \le \by$ means $x_i \le y_i$ for every $i=1,...,d$.  
We use letters $C$  and $K$ to denote general positive constants which may take different values. For the quantities $A_n(f,\bk)$ and $B_n(f,\bk)$ depending on 
$n \in \NN$, $f \in W$, $\bk \in \ZZd$,  
we write  $A_n(f,\bk) \ll B_n(f,\bk)$, $f \in W$, $\bk \in \ZZd$ ($n \in \NN$ is specially dropped),  
if there exists some constant $C >0$ such that 
$A_n(f,\bk) \le CB_n(f,\bk)$ for all $n \in \NN$,  $f \in W$, $\bk \in \ZZd$ (the notation $A_n(f,\bk) \gg B_n(f,\bk)$ has the obvious opposite meaning), and  
$A_n(f,\bk) \asymp B_n(f,\bk)$ if $A_n(f,\bk) \ll B_n(f,\bk)$
and $B_n(f,\bk) \ll A_n(f,\bk)$.  Denote by $|G|$ the cardinality of the set $G$. 
For a Banach space $X$, denote by the boldface $\bX$ the unit ball in $X$.

	\section{Weighted integration in one dimension}
\label{Weighted integration in one dimension}

In this section, for one-dimensional numerical integration, we prove upper and lower bounds for $\Int_n\big(\bW^r_{1,w_r}(\UU)\big)$ and present some asymptotically optimal quadratures.

We start this section with a well-known inequality which is implied directly from the definition \eqref{Int_n} and which is quite useful  for lower estimation of $\Int_n(\bF)$.
 	Let $\bF$ be a set of continuous functions on $\UUd$. 	Then we have
 	\begin{equation} \label{Int_n>}
 		\Int_n(\bF) \ge \inf_{\brab{\bx_1,...,\bx_n} \subset \UUd} \ \sup_{f\in \bF: \ f(\bx_i)= 0,\ i =1,...,n}\bigg|\int_{\UUd} f(\bx) \bw(\bx) \, \rd\bx\bigg|.
 	\end{equation}
 
%
We first consider the problem of approximation of  integral \eqref{If} for univariate functions from 
$W^r_{1,w_r}(\RRp)$. Let $(p_m(w))_{m \in \NN}$ be the sequence of orthonormal polynomials with respect to the weight $w$. In the classical quadrature theory,  a possible choice of integration nodes is to take the zeros of the polynomials $p_m(w)$.
Denote by $x_{m,k}$, $1 \le k \le m$, the increasing sequence of zeros of 	$p_m(w)$. These zeros are contained in the interval $(0,4m)$ and $x_m \approx 4m - m^{1/3}$ 
(see, e.g., \cite{MM2003,MO2001}).
These zeros are located as 
\begin{equation}\label{zeros-location}
	\frac{C'}{m}  < x_{m,1} < \cdots <  x_{m,m} \le 4m - Cm^{1/3}, 
\end{equation}
with positive constants $C,C'$ independent of $m$.  It is useful to remark that
\begin{equation} \label{x_k+1 - x_k}
	x_{m,k} \, \asymp  \,  \frac{k^2}{m}, \ \ 	
	d_{m,k} \, \asymp  \,  \brac{\frac{x_{m,k}}{4m - x_{m,k}}}^{1/2}, \ k=1,...,m, 
\end{equation} 
where $d_{m,k}:= x_{m,k} - x_{m,k-1}$ (with the convention $x_0:=0$) is the distance between consecutive zeros of  the polynomial $p_m(w)$.

 For a continuous function on $\RRp$, the Gauss-Laguerre quadrature is defined as
\begin{equation} \label{G-Q_mf}
	Q^{\rm L}_mf: = \sum_{k=1}^m \lambda_{m,k}(w) f(x_{m,k}), 
\end{equation} 
where $\lambda_{m,k}(w)$ are the corresponding Cotes numbers, defined as
 \begin{equation} \label{}
 	\lambda_{m,k}:= \int_{\RRp} \ell_k(x) w(x) \, \rd x, \ \ \
 	\ell_k(x):= \prod_{1\le i \le m, \, i \not= k  } \frac{x - x_j}{x_k - x_i}.
 \end{equation}

Unfortunately, this quadrature  gives a very low convergence rate for functions from $\bW^r_{1,w_r}(\RRp)$, see Remark \ref{Comment on G-quadrature} below. 
In \cite{MM2001}, the authors proposed truncated Gauss-Laguerre quadratures which crucially  improve the convergence rate of  $\Int_n\big(\bW^r_{1,w_r}(\RRp)\big)$ as shown in Theorem \ref{thm:Q_n-d=1} below. 

Throughout this paper, we fix a number $\theta$ with $0 < \theta < 1$, and denote by $j(m)$ the largest integer satisfying $x_{m,j(m)} \le 4\theta m$.
By  \eqref{zeros-location} and \eqref{x_k+1 - x_k}, for $m$ sufficiently large we have that 
\begin{equation} \label{<j(m)<}
Cm \le j(m) \le m
\end{equation} 
with a positive constant $C$ depending on $\alpha, \theta$ only. 

For a continuous function on $\RRp$, consider 
the truncated Gauss-Laguerre quadrature 
\begin{equation} \label{G-Q_mf-TG}
	Q^{\rm{TL}}_{ j(m)} f: = \sum_{k=1}^{ j(m)} \lambda_{m,k}(w) f(x_{m,k}). 
\end{equation} 
Notice that the number $j(m)$ of samples in the quadrature $Q^{\rm{TL}}_{ j(m)}f$ is strictly smaller than $m$ -- the number of samples in the quadrature  $Q^{\rm L}_mf$.   However, due to \eqref{<j(m)<} it has the asymptotic order as $ j(m)\asymp m$ when $m$ going to infinity.

\begin{theorem} \label{thm:Q_n-d=1}
	For any $n \in \NN$, let $m_n$ be the largest integer such that $ j(m_n) \le n$. Then we have that
	\begin{equation}\label{Q_n-d=1}
		n^{-3r/4}
		\ll
		\Int_n\big(\bW^r_{1,w_r}(\RRp)\big) 
		\le
		\sup_{f\in \bW^r_{1,w_r}(\RRp)} \bigg|\int_{\RRp}f(x) w(x) \rd x - Q^{\rm{TL}}_{ j(m_n)}f\bigg| 
		\ll
		n^{- r/2}.
	\end{equation}
\end{theorem}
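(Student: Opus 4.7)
The plan is to prove the upper and lower bounds separately. For the \emph{upper bound} I would apply the truncated Gauss--Laguerre quadrature $Q^{\rm TL}_{j(m_n)}$ of \eqref{G-Q_mf-TG} and invoke the convergence-rate estimate of Mastroianni and Monegato \cite{MM2001},
\[
\left|\int_{\RRp} f(x)\,w(x)\,\rd x - Q^{\rm TL}_{j(m)} f\right| \ll m^{-r/2}\,\|f\|_{W^r_{1,w_r}(\RRp)}, \qquad f\in W^r_{1,w_r}(\RRp).
\]
Because $j(m_n)\le n$ by construction and $j(m_n)\asymp m_n\asymp n$ by \eqref{<j(m)<}, taking $m=m_n$ gives the desired rate of $n^{-r/2}$. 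The technical step is translating the polynomial-approximation estimates of \cite{MM2001,MM2003,MO2001} into the weighted mixed-smoothness norm of \eqref{W-Omega}, essentially using the properties \eqref{zeros-location}--\eqref{x_k+1 - x_k}.

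For the \emph{lower bound}, the plan is to apply \eqref{Int_n>}: for any prescribed nodes $x_1,\dots,x_n\in\RRp$ it suffices to exhibit one $f\in\bW^r_{1,w_r}(\RRp)$ that vanishes at all $x_i$ and satisfies $\left|\int f\,w\right|\gg n^{-3r/4}$. I would divide $[0,2\sqrt n]$ into $2n$ equal sub-intervals of length $h:=n^{-1/2}$; by pigeonhole, at least one of them is node-free. Let $x^*$ denote its midpoint, so that $h\le x^*\le 2\sqrt n$ (shifting the initial pigeonhole interval slightly if the empty sub-interval happens to be the leftmost one). Fix a $C^\infty$ bump $\phi$ with $\supp\phi\subset(-1,1)$ and set $g(x):=\phi(2(x-x^*)/h)$, which is supported inside the empty sub-interval and so vanishes at every $x_i$. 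Since $h\le x^*$ and $h\le 1$, the weights $w$ and $w_r$ differ by only a bounded factor over $\supp g$, and a short computation from \eqref{W-Omega} gives
\[
\|g\|_{W^r_{1,w_r}(\RRp)}\asymp w_r(x^*)\,h^{1-r}, \qquad \int_{\RRp} g(x)\,w(x)\,\rd x \asymp w(x^*)\,h.
\]
Setting $f:=g/\|g\|_{W^r_{1,w_r}(\RRp)}$ yields a unit-norm fooling function whose weighted integral is of order $h^r\, w(x^*)/w_r(x^*)\asymp h^r(x^*)^{-r/2}$. In the worst case $x^*\asymp\sqrt n$ (which an adversary can force by clustering all $n$ nodes inside $[0,\sqrt n]$) this equals $n^{-r/2}\cdot n^{-r/4}=n^{-3r/4}$, as required.

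The \emph{main obstacle} is the upper bound: importing the truncated Gauss--Laguerre error analysis of \cite{MM2001} into the weighted mixed-smoothness setting $W^r_{1,w_r}(\RRp)$ requires weighted moduli of smoothness together with the precise distribution of Laguerre zeros throughout the Mhaskar interval $(0,4m)$. The bump calculation for the lower bound is elementary; the only delicate point is to verify that $x^*$ can be kept bounded below by a positive multiple of $h$ so that the factor $x^{\alpha+r/2}$ appearing in $w_r$ is genuinely slowly varying on $\supp g$.
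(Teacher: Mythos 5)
Your proposal is correct and follows essentially the same route as the paper: the upper bound is exactly the Mastroianni--Monegato truncated Gauss--Laguerre estimate applied with $m_n \asymp n$, and the lower bound is the same pigeonhole/fooling-bump argument at scale $n^{-1/2}$, whose worst case places the bump at distance $\asymp \sqrt{n}$ from the origin and yields $n^{-3r/4}$. The only cosmetic differences are that the paper uses the weight-normalized bump $g w^{-1}$ and restricts the pigeonhole to the range $[\sqrt{n},\,2\sqrt{n}]$ (so the empty interval is automatically at $x \asymp \sqrt{n}$ and your side case of an interval near the origin never arises), which changes nothing essential.
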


\begin{proof} 
For $f \in W^r_{1,w_r}(\RRp)$, there holds the inequality
	\begin{equation}\label{MM2001}
	\bigg|\int_{\RRp}f(x) w(x) \rd x - Q^{\rm{TL}}_{j(m)}f\bigg| 
	\le 
C\brac{m^{-r/2} \norm{f^{(r)}}{L_{1, w_r}(\RRp)} + e^{-Km}\norm{f}{L_{1,w_r}(\RRp)} } 
\end{equation}
with some constants $C$ and $K$ independent of $m$ and $f$, see \cite[Corollary 2.4]{MM2001}.  The inequality \eqref{MM2001} implies the upper bound in \eqref{Q_n-d=1}.

The lower bound in \eqref{Q_n-d=1} is  a particular case  in Theorem \ref{theorem:Int_n(W)} below. Since its proof is much simpler for the case $d=1$, let us process it separately.  Let $\brab{\xi_1,...,\xi_n} \subset \RRp$ be arbitrary $n$ points. For a given $n \in \NN$, we put $\delta = n^{-1/2}$ and $t_j = \delta j$, $j \in \NN_0$. Then there is $i \in \NN$ with 
$n + 1 \le  i \le 2n + 2$ such that the interval $(t_{i-1}, t_i)$ does not contain any point from the set $\brab{\xi_1,...,\xi_n}$.
Take a nonnegative function $\phi \in C^\infty_0([0,1])$, $\phi \not= 0$, and put
	\begin{equation*}\label{b_s}
b_0:= \int_0^1 \phi(y) \rd y > 0, \quad b_s :=  \int_0^1 |\phi^{(s)}(y)| \rd y, \ s = 1,...,r.
\end{equation*}		
Define the functions $g$ and $h$ on $\RRp$ by
\begin{equation*}\label{g}
g(x):= 
\begin{cases}
\phi(\delta^{-1}(x - t_{i-1})), & \ \ x \in (t_{i-1}, t_i), \\
0, & \ \ \text{otherwise},	
\end{cases}	
\end{equation*}
and
	\begin{equation}\label{h,v}
		h(x):= (gw^{-1})(x).		
	\end{equation}
Let us estimate the norm $\norm{h}{W^r_{1,w_r}(\RRp)}$. For a given $k \in \NN_0$ with $0 \le k \le r$,  we have
	\begin{equation}\label{h^(s)}
	h^{(k)} = (gw^{-1})^{(k)} = \sum_{s=0}^k \binom{k}{s} g^{(k-s)}(w^{-1})^{(s)}. 		
\end{equation}
By a direct computation we find that   for $x \in \RRp$,
	\begin{equation}\label{w^(s)}
(w^{-1})^{(s)}(x) = (w^{-1})(x) x^{-s}\prod_{j=0}^{s-1}(-\alpha -j).
\end{equation}
Hence,  we obtain
	\begin{equation}\label{h^(s)w(x)}
	h^{(k)}(x) w_r(x)= \sum_{s=0}^k \binom{k}{s} g^{(k-s)}(x) x^{r/2-s}
	\prod_{j=0}^{s-1}(-\alpha -j) 
\end{equation}
which implies that 
	\begin{equation}\nonumber
	\int_{\RRp}|h^{(k)} w_r|(x) \rd x 
	\le  C \max_{0\le s \le k} \ \int_{t_{i-1}}^{t_i} x^{r/2-s}|g^{(k-s)}(x)| \rd x.		
\end{equation}
From  the inequalities $n^{1/2} \le x \le (2n + 2)n^{-1/2}$ for $x \in [t_{i-1},t_i]$, and 
\begin{equation*}\label{int-g^(k-s)}
  \int_{t_{i-1}}^{t_i} |g^{(k-s)}(x)| \rd x = b_{k-s} \delta^{-k+s+1} = 	b_{k-s} n^{(k-s-1)/2},
\end{equation*}
we derive
	\begin{equation*}\label{int-h^(s)w_r}
		\begin{aligned}
	\int_{\RRp}|h^{(k)} w_r|(x) \rd x 
	&\le  C \max_{0\le s \le k}   \int_{t_{i-1}}^{t_i} x^{r/2-s}|g^{(k-s)}(x)| \rd x
	\\
	& \le  C \max_{0\le s \le k}   \brac{n^{1/2}}^{r/2-s}
	\int_{t_{i-1}}^{t_i}|g^{(k-s)}(x)| \rd x
	\\& 
	\le  C n^{r/4}\max_{0\le s \le k} n^{-s/2}		 n^{(k-s-1)/2}
	\\
	&= C n^{r/4} n^{(k-1)/2} \le  C n^{3r/4 - 1/2}.		
	\end{aligned}
\end{equation*}
If we define 
	\begin{equation*}\label{h-bar}
	\bar{h}:=  C^{-1} n^{-(3r/4 - 1/2)}h,
\end{equation*}
then $\bar{h}$ is nonnegative, 
$\bar{h} \in \bW^r_{1,w_r}(\RRp)$, $\sup \bar{h} \subset (t_{i-1},t_i)$ and
	\begin{equation*}\label{int-h^(s)w2}
	\begin{aligned}	
		\int_{\RRp}(\bar{h}w)(x) \rd x 
	&=  C^{-1} n^{-(3r/4 - 1/2)}\int_{t_{i-1}}^{t_i}g(x) \rd x
	\\
	&
	=  C^{-1} n^{-(3r/4 - 1/2)} b_0 \delta  \gg n^{-3r/4}
	\end{aligned}
\end{equation*}
 Since the interval $(t_{i-1}, t_i)$ does not contain any point from the set $\brab{\xi_1,...,\xi_n}$, we have $\bar{h}(\xi_k) = 0$, $k = 1,...,n$. Hence, by the inequality \eqref{Int_n>},
	\begin{equation}\nonumber
		\Int_n\big(\bW^r_{1,w_r}(\RRp)\big) 	\ge  \int_{\RRp}\bar{h}(x) v(x) \rd x
	 \gg  n^{- 3r/4}.
\end{equation}
The lower bound in \eqref{Q_n-d=1} is proven.
	\hfill
\end{proof}

\begin{remark} \label{Comment on G-quadrature}
{\rm	
For the full Gaussian quadratures $Q^{\rm L}_n$,  it has been proven in \cite[Theorem 2.8]{MM2003}	the convergence rate
\begin{equation}\nonumber
	\sup_{f\in \bW^1_{1,w_r}(\RRp)}	\bigg|\int_{\RRp}f(x) w(x) \rd x  - Q^{\rm L}_n f\bigg|
	\asymp
	n^{-1/6}
\end{equation}
which is much worse than the convergence rate of 
$	\Int_n\big(\bW^1_{1,w_r}(\RRp)\big) \ll 	n^{-1/2}$ as 
in \eqref{Q_n-d=1} for $r = 1$.
}
\end{remark}

Let $f \in W^r_{1,w_r}(\RR)$. We define  $f_{+1}(x):=f(x)$ and
$f_{-1}(x):=f(-x)$ for $x \in \RRp$. 
Then the functions $f_{+1}$  
and
$f_{-1}$ belong to 
$W^r_{1,w_r}(\RRp)$ satisfying
\begin{equation}\label{norm<}
\norm{f_{\pm 1}}{W^r_{1,w_r}(\RRp)} \le \norm{f}{W^r_{1,w_r}(\RR)},
\end{equation} 
and 
\begin{equation}\label{int=}
\int_{\RR}f(x) w(x) \rd x  = \int_{\RRp}f_{+1}(x) w(x) \rd x  + \int_{\RRp}f_{-1}(x) w(x) \rd x. 
\end{equation}

For a continuous function on $\RR$, consider 
the truncated Gauss-Laplace quadrature 
\begin{equation} \label{G-Q_mf-2TL}
	Q^{\rm{2TL}}_{ j(m)} f
	: = 
	\sum_{k=1}^{ j(m)} \lambda_{m,k}(w) \brac{f(x_{m,k}) + f(-x_{m,k})}. 
\end{equation} 

From Theorem~\ref{thm:Q_n-d=1} and \eqref{norm<}, \eqref{int=} we derive
\begin{theorem} \label{thm:Q_n-d=1-Laplace}
	For any $n \in \NN$, let $m_n$ be the largest integer such that $2 j(m_n) \le n$. Then the quadratures	$Q^{\rm{2TL}}_{ j(m_n)} \in \Qq_n$, $n \in \NN$, are  asymptotically optimal  for $\bW^r_{1,w_r}(\RR)$ and
	\begin{equation}\label{Q_n-d=1-Laplace}
			n^{-3r/4}
		\ll
		\Int_n\big(\bW^r_{1,w_r}(\RR)\big) 
		\le
		\sup_{f\in \bW^r_{1,w_r}(\RR)} \bigg|\int_{\RR}f(x) w(x) \rd x - Q^{\rm{2TL}}_{ j(m_n)}f\bigg| 
		\ll
		n^{- r/2}.
	\end{equation}
\end{theorem}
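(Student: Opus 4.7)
The plan is to reduce the two-sided statement to the one-sided Theorem \ref{thm:Q_n-d=1} using the symmetrization identities \eqref{norm<} and \eqref{int=}. The symmetry of $w$ together with the definition \eqref{G-Q_mf-2TL} yields the quadrature decomposition
\begin{equation*}
Q^{\rm{2TL}}_{j(m)} f \ = \ Q^{\rm{TL}}_{j(m)} f_{+1} + Q^{\rm{TL}}_{j(m)} f_{-1}, \qquad f \in W^r_{1,w_r}(\RR),
\end{equation*}
which exactly parallels \eqref{int=}. The normalization $2 j(m_n) \le n$ forces $Q^{\rm{2TL}}_{j(m_n)} \in \Qq_n$ while \eqref{<j(m)<} still gives $j(m_n) \asymp n$, so no order is lost by the doubling.

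For the upper bound, take $f \in \bW^r_{1,w_r}(\RR)$. The inequality \eqref{norm<} places $f_{+1}$ and $f_{-1}$ inside $\bW^r_{1,w_r}(\RR_+)$. Subtracting the quadrature decomposition above from \eqref{int=}, applying the triangle inequality, and feeding each summand into Theorem \ref{thm:Q_n-d=1} produces
\begin{equation*}
\bigg|\int_\RR f(x) w(x)\, \rd x - Q^{\rm{2TL}}_{j(m_n)} f\bigg| \ \le \ \sum_{\sigma \in \{\pm 1\}} \bigg|\int_{\RR_+} f_\sigma(x) w(x)\, \rd x - Q^{\rm{TL}}_{j(m_n)} f_\sigma \bigg| \ \ll \ n^{-r/2},
\end{equation*}
which is the desired upper estimate.

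For the lower bound, I reuse the fooling-function construction from the proof of Theorem \ref{thm:Q_n-d=1} via \eqref{Int_n>}. Given arbitrary nodes $\{\xi_1, \ldots, \xi_n\} \subset \RR$, set $\delta := n^{-1/2}$ and $t_j := \delta j$. Among the $n+2$ intervals $(t_{i-1}, t_i)$ with $n+1 \le i \le 2n+2$, at least one lies in $\RR_+$ and is disjoint from $\{\xi_1, \ldots, \xi_n\}$. The function $\bar h$ built in Theorem \ref{thm:Q_n-d=1} and extended by zero to $\RR$ has support bounded away from the origin, so evenness of $w_r$ preserves all the norm computations; hence (up to a constant) $\bar h \in \bW^r_{1,w_r}(\RR)$, it vanishes on $\{\xi_k\}$, and $\int_\RR \bar h(x) w(x)\, \rd x \gg n^{-3r/4}$. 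Plugging this into \eqref{Int_n>} gives the lower bound.

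The whole argument is bookkeeping on top of Theorem \ref{thm:Q_n-d=1}; the only point that really needs care is checking that the doubling in \eqref{G-Q_mf-2TL} does not degrade the rate, which is precisely why $m_n$ is defined from $2 j(m_n) \le n$ rather than $j(m_n) \le n$, and why \eqref{<j(m)<} is invoked to recover $j(m_n) \asymp n$.
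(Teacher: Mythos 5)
Your proposal is correct and follows essentially the same route as the paper, which derives Theorem \ref{thm:Q_n-d=1-Laplace} directly from Theorem \ref{thm:Q_n-d=1} together with \eqref{norm<} and \eqref{int=}; your splitting $Q^{\rm{2TL}}_{j(m)} f = Q^{\rm{TL}}_{j(m)} f_{+1} + Q^{\rm{TL}}_{j(m)} f_{-1}$ is exactly the intended reduction for the upper bound. Your transplant of the fooling-function argument to $\RR$ (choosing the empty interval inside $\RR_+$, away from the origin) is likewise the same technique the paper uses for its lower bounds, just spelled out for this case.
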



	\section{Weighted integration in high dimension}
\label{Weighted integration in high dimension}

In this section, for high-dimensional numerical integration ($d \ge 2$), we prove upper and lower bounds of   $\Int_n\big(\bW^r_{1,\bw_r}(\UUd)\big)$ and construct  quadratures based on  step-hyperbolic-corner  or step-hyperbolic-cross grids of integration nodes which give the upper bounds.

	Assume that there exists a sequence of quadratures $\brac{Q_{2^k}}_{k \in \NN_0}$ with
\begin{equation}\label{Q_2^kf}
	Q_{2^k}f: = \sum_{s=1}^{2^k} \lambda_{k,s} f(x_{k,s}), \ \ \{x_{k,1},\ldots,x_{k,2^k}\}\subset \UU,
\end{equation}
such that 
\begin{equation}\label{IntErrorR}
	\bigg|\int_{\UU} f(x) w(x)\rd x - Q_{2^k}f\bigg| 
	\leq C 2^{- ak} \|f\|_{W^r_{1,w_r}(\UU)}, 
	\ \  \ k \in \NN_0,  \ \ f \in W^r_{1,w_r}(\UU), 
\end{equation}
for some  number $a>0$ and constant $C>0$.

Based on a sequence $\brac{Q_{2^k}}_{k \in \NN_0}$ of the form \eqref{Q_2^kf} satisfying \eqref{IntErrorR}, we construct quadratures on $\UUd$ by using the well-known Smolyak algorithm. We define for $k \in \NN_0$, the one-dimensional operators
\begin{equation*}\label{DeltaI}
\Delta_k^Q:= Q_{2^k} - Q_{2^{k-1}}, \  k >0, \ \ \Delta_0^Q:= Q_1.  
\end{equation*}
For $\bk \in \NNd$, the $d$-dimensional operator  $\Delta_\bk^Q$  is defined as the tensor  product of one-dimensional operators:
\begin{equation}\label{tensor-product}
	Q_{2^\bk}:= \bigotimes_{i=1}^d Q_{2^{k_i}} , \ \	
	\Delta_\bk^Q:= \bigotimes_{i=1}^d \Delta_{k_i}^Q, \ \ 	
	E_\bk^Q:= \bigotimes_{i=1}^d E_{k_i}^Q, 
\end{equation}
where $2^\bk:= (2^{k_1},\cdots, 2^{k_d})$ and 
the univariate operators  $\Delta_{k_j}^Q$ 
 are successively applied to the univariate functions $\bigotimes_{i<j} \Delta_{k_i}^Q(f)$ by considering them  as 
functions of  variable $x_j$ with the other variables held fixed. The operators$\Delta_\bk^Q$  are well-defined for continuous functions on $\UUd$, in particular for ones from $W^r_{1,\bw_r}(\UUd)$.

Notice that if $f$ is a continuous function on $\UUd$, then we have
\begin{equation}\label{Delta_bk}
\Delta_\bk^Q f =  \sum_{e \subset \brab{1,...,d}} (-1)^{d - |e|}Q_{2^{\bk(e)}} f
	= \sum_{e \subset \brab{1,...,d}} (-1)^{d - |e|}\sum_{\bs=\bone}^{2^{\bk(e)}} \lambda_{\bk(e),\bs} f(\bx_{\bk(e),\bs}), 
\end{equation}
where  $\bk(e) \in \NNd_0$ is defined by $k(e)_i = k_i$, $i \in e$, and 	$k(e)_i = \max(k_i-1,0)$, $i \not\in e$,  
$$
\bx_{\bk,\bs}:= \brac{x_{k_1,s_1},...,x_{k_d,s_d}}, \ \ \ 
\lambda_{\bk,\bs}:= \prod_{i=1}^d \lambda_{k_i,s_i} ,
$$
and the summation $\sum_{\bs=\bone}^{2^\bk}$ means that the sum is taken over all $\bs$ such that $\bone \le \bs \le 2^\bk$.

We now define an  algorithm for quadrature on sparse grids adopted from the alogorithm for sampling recovery initiated by Smolyak (for detail see \cite[Sections 4.2 and 5.3]{DTU18B}). For $\xi > 0$, we define the operator
	\begin{equation}\nonumber
	Q_\xi
	:= 	\sum_{|\bk|_1 \le \xi } \Delta_{\bk}^Q.
\end{equation}	
From \eqref{Delta_bk} we can see that $Q_\xi$ is a quadrature on $\UUd$ of the form  \eqref{Q_nf-introduction}:
\begin{equation}\label{Q_xi}
	Q_\xi f
	= 	\sum_{|\bk|_1 \le \xi } \ \sum_{e \subset \brab{1,...,d}} (-1)^{d - |e|}\ \sum_{\bs=\bone}^{2^{\bk(e)}} \lambda_{\bk(e),\bs} f(\bx_{\bk(e),\bs})
	= \sum_{(\bk,e,\bs) \in G(\xi)} \lambda_{\bk,e,\bs} f(\bx_{\bk,e,\bs}), 
\end{equation}
where 
$$
\bx_{\bk,e,\bs}:= \bx_{\bk(e),\bs}, \quad \lambda_{\bk,e,\bs}:= (-1)^{d - |e|}\lambda_{\bk(e),\bs}
$$ 
and 
\begin{equation}\nonumber
G(\xi)	:= \brab{(\bk,e,\bs): \ |\bk|_1 \le \xi, \,   e \subset \brab{1,...,d}, \,  \bone \le \bs \le \bk(e)}
\end{equation}
is a finite set.
The set of integration nodes in this quadrature 
\begin{equation}\nonumber
H(\xi):=\brab{\bx_{\bk,e,\bs}}_{(\bk,e,\bs) \in G(\xi)}
\end{equation}
is a step hyperbolic cross   in the function domain $\UUd$. 
The number of integration nodes in the quadrature $Q_\xi$ is 
\begin{equation}\nonumber
	|G(\xi)|
	= 	\sum_{|\bk|_1 \le \xi } \ \sum_{e \subset \brab{1,...,d}}2^{|\bk(e)|_1}
\end{equation}
which can be estimated as 
\begin{equation}\label{|G(xi)|}
	|G(\xi)|
	\asymp	\sum_{|\bk|_1 \le \xi }2^{|\bk|_1} \ \asymp \ 2^\xi \xi^{d - 1}, \ \ \xi \ge 1.
\end{equation}
This quadrature plays a basic role in the proof of the upper bound in the main results of the present paper \eqref{Int_n(W^r_1)}.

In a way analogous to the proof of  \cite[Lemma 3.5]{DD2023}, we can prove 

\begin{lemma} \label{lemma:Q_xi-error}
	Under the assumption \eqref{Q_2^kf}--\eqref{IntErrorR}, we have that
\begin{equation}\label{upperbound}
	\bigg|\int_{\UUd} f(\bx) \bw(\bx)\rd \bx  -  Q_\xi f\bigg| 
	\leq C 2^{- a\xi} \xi^{d - 1} \|f\|_{W^r_{1,\bw_r}(\UUd)}, 
	\ \ \xi \ge 1, \ \ f \in W^r_{1,\bw_r}(\UUd). 
\end{equation}
\end{lemma}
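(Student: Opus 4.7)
My plan is to follow the standard Smolyak tensor-product error analysis, essentially mirroring \cite[Lemma 3.5]{DD2023}. First, the univariate assumption \eqref{IntErrorR} yields $Q_{2^k}f\to \int_\UU f(x)w(x)\,\rd x$ as $k\to\infty$ for every $f\in W^r_{1,w_r}(\UU)$, so that the telescoping identity
\begin{equation*}
\int_\UU f(x)w(x)\,\rd x \;=\; \sum_{k=0}^\infty \Delta_k^Q f
\end{equation*}
holds in the univariate case. Since the weight factorizes as $\bw(\bx)=\prod_{i=1}^d w(x_i)$, Fubini's theorem combined with the tensor-product construction \eqref{tensor-product} extends this to
\begin{equation*}
\int_{\UUd} f(\bx)\bw(\bx)\,\rd\bx \;=\; \sum_{\bk\in\NN_0^d}\Delta_\bk^Q f, \qquad f\in W^r_{1,\bw_r}(\UUd),
\end{equation*}
and subtracting the partial sum $Q_\xi f=\sum_{|\bk|_1\le \xi}\Delta_\bk^Q f$ gives the key representation
\begin{equation*}
\int_{\UUd} f(\bx)\bw(\bx)\,\rd\bx \;-\; Q_\xi f \;=\; \sum_{|\bk|_1>\xi}\Delta_\bk^Q f.
\end{equation*}

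The second and most delicate step is the tensor-product error bound
\begin{equation*}
|\Delta_\bk^Q f| \;\ll\; 2^{-a|\bk|_1}\,\|f\|_{W^r_{1,\bw_r}(\UUd)}, \qquad \bk\in\NN_0^d.
\end{equation*}
Denote by $E_k(g):=\int_\UU g(x)w(x)\,\rd x - Q_{2^k}g$ the univariate error functional, so that $\Delta_k^Q=E_{k-1}-E_k$ for $k\ge 1$ (with a trivial adjustment at $k=0$). The assumption \eqref{IntErrorR} gives $|E_k(g)|\ll 2^{-ak}\|g\|_{W^r_{1,w_r}(\UU)}$, hence $|\Delta_k^Q g|\ll 2^{-ak}\|g\|_{W^r_{1,w_r}(\UU)}$. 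In the multivariate case I expand
\begin{equation*}
\Delta_\bk^Q \;=\; \bigotimes_{i=1}^d\brac{E_{k_i-1,x_i}-E_{k_i,x_i}}
\end{equation*}
into $2^d$ tensor-product terms, where $E_{j,x_i}$ denotes the univariate error functional applied in variable $x_i$ with the remaining variables held fixed. Applying these coordinate by coordinate and using the mixed-smoothness structure of $W^r_{1,\bw_r}(\UUd)$ to control the iterated partial norms via Fubini produces a product of $d$ one-dimensional error factors $2^{-ak_i}$, which combine to the claimed bound.

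Combining the two steps yields
\begin{equation*}
\bigg|\int_{\UUd}f(\bx)\bw(\bx)\,\rd\bx - Q_\xi f\bigg| \;\ll\; \|f\|_{W^r_{1,\bw_r}(\UUd)}\sum_{|\bk|_1>\xi} 2^{-a|\bk|_1}.
\end{equation*}
Using the standard layer count $|\{\bk\in\NN_0^d:|\bk|_1=m\}|=\binom{m+d-1}{d-1}\ll m^{d-1}$ and summing the geometric series in $m>\xi$ gives $\sum_{|\bk|_1>\xi}2^{-a|\bk|_1}\ll 2^{-a\xi}\xi^{d-1}$, which is exactly \eqref{upperbound}.

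The main obstacle is the tensor-product error bound of the second step. The univariate estimate has to be iterated coordinate by coordinate, and this works only because the norm of $W^r_{1,\bw_r}(\UUd)$ is built from the product weight $\bw_r=\prod_i w_r(x_i)$ and mixed derivatives with $|\bk|_\infty\le r$. This ensures that after applying $E_{k_i,x_i}$ in $x_i$, the resulting function of the remaining variables still lies in the $(d-1)$-dimensional mixed-smoothness weighted Sobolev space with norm dominated by $\|f\|_{W^r_{1,\bw_r}(\UUd)}$. Carrying out this coordinate-by-coordinate bookkeeping is exactly the content of the argument in \cite[Lemma 3.5]{DD2023} that we would reproduce here.
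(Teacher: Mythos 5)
Your proposal is correct and follows essentially the same route as the paper, which itself only invokes the argument of \cite[Lemma 3.5]{DD2023}: the telescoping representation of the error as the tail sum $\sum_{|\bk|_1>\xi}\Delta_\bk^Q f$, the coordinate-by-coordinate iteration of the univariate bound \eqref{IntErrorR} to get $|\Delta_\bk^Q f|\ll 2^{-a|\bk|_1}\|f\|_{W^r_{1,\bw_r}(\UUd)}$, and the layer count giving $\sum_{|\bk|_1>\xi}2^{-a|\bk|_1}\ll 2^{-a\xi}\xi^{d-1}$. The only places left implicit (justifying the interchange of limits in the multivariate telescoping and the boundedness of the $k_i=0$ factor $\Delta_0^Q=Q_{1}$ on the weighted space) are exactly the bookkeeping the cited lemma supplies, as you note.
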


\begin{remark} \label{remark1}
	{\rm	
		Importantly, from Theorems \ref{thm:Q_n-d=1} and \ref{thm:Q_n-d=1-Laplace} it follows that the truncated Gaussian-Laguerre quadratures $Q^{\rm{TL}}_{ j(m)}$ and $Q^{\rm{2TL}}_{ j(m)}$ 
		form  a sequence $\brac{Q_{2^k}}_{k \in \NN_0}$ of the form \eqref{Q_2^kf} satisfying  \eqref{IntErrorR} with $a = r/2$. 
	}
\end{remark}

\begin{theorem} \label{theorem:Int_n(W)}
	We have that
	\begin{equation}\label{Int_n(W)}
n^{-3r/4} (\log n)^{3r(d-1)/4}
	\ll	
	\Int_n(\bW^r_{1,\bw_r}(\UUd)) 
	\ll 
n^{-r/2} (\log n)^{(r/2 +1)(d-1)}.
	\end{equation}
\end{theorem}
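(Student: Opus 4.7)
By Remark~\ref{remark1}, the truncated Gauss-Laguerre quadratures $Q^{\rm TL}_{j(m_k)}$ (for $\UU=\RRp$) and the truncated Gauss-Laplace quadratures $Q^{\rm 2TL}_{j(m_k)}$ (for $\UU=\RR$), with $m_k$ chosen so that the node count is comparable to $2^k$, form a sequence of the form~\eqref{Q_2^kf} satisfying~\eqref{IntErrorR} with $a=r/2$. Feeding this into Lemma~\ref{lemma:Q_xi-error} yields, for the Smolyak quadrature $Q_\xi$ whose node count satisfies $|G(\xi)|\asymp 2^\xi\xi^{d-1}$ by~\eqref{|G(xi)|}, the error bound $\ll 2^{-r\xi/2}\xi^{d-1}\norm{f}{W^r_{1,\bw_r}(\UUd)}$. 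Taking $\xi$ as the largest integer with $|G(\xi)|\le n$ gives $\xi\asymp\log n$ and $2^\xi\asymp n(\log n)^{-(d-1)}$, and substitution produces the desired upper bound $n^{-r/2}(\log n)^{(r/2+1)(d-1)}$.

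\textbf{Lower bound, setup.} I plan to extend the one-dimensional fooling-function construction from the proof of Theorem~\ref{thm:Q_n-d=1} to high dimensions by means of a hyperbolic-cross family of anisotropic boxes. Fix $\xi$ such that $2^\xi\xi^{d-1}\asymp Cn$ for a sufficiently large absolute constant $C$. For each $\bk\in\NNd_0$ with $|\bk|_1=\xi$, set $N_i:=2^{k_i}$ and $\delta_i:=N_i^{-1/2}$, and partition the box $\prod_{i=1}^d(\sqrt{N_i},2\sqrt{N_i})$ into $\prod_i 2^{k_i}=2^\xi$ congruent sub-rectangles of side length $\delta_i$ in direction $i$. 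Since the condition $\sqrt{N_i}<x_i<2\sqrt{N_i}$ pins down each $k_i$ to within a constant, every point of $\UUd$ lies in the box of $O(1)$ such $\bk$ and hence in $O(1)$ sub-rectangles. For $C$ large enough the total count $\asymp 2^\xi\xi^{d-1}$ of sub-rectangles strictly exceeds the number occupied by the $n$ given nodes, so at least one sub-rectangle $R^*$ is disjoint from $\{\bxi_1,\ldots,\bxi_n\}$.

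\textbf{Lower bound, bump.} In $R^*$ I would place the tensor-product bump $H(\bx):=\prod_{i=1}^d h_i(x_i)$, where each $h_i:=g_i\cdot w^{-1}$ is modelled on the 1D bump in the proof of Theorem~\ref{thm:Q_n-d=1}, with $g_i$ a fixed smooth nonnegative $\phi$ rescaled to the direction-$i$ projection of $R^*$. The 1D calculation there, transposed to scale $N_i$ in place of $n$, gives $\int_\UU h_i w\,dx_i\asymp\delta_i$ and $\int_\UU |h_i^{(l)}|w_r\,dx_i\ll N_i^{r/4+(l-1)/2}$ for $0\le l\le r$, the last being dominated by $l=r$. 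Since for $p=1$ the norm in~\eqref{W-Omega} is a sum and $D^{\bl}H$ factorises, this produces
\begin{equation*}
\norm{H}{W^r_{1,\bw_r}(\UUd)}\ll\prod_{i=1}^d N_i^{(3r-2)/4}=2^{\xi(3r-2)/4},\qquad\int_{\UUd}H\bw\,d\bx=\prod_{i=1}^d\delta_i=2^{-\xi/2}.
\end{equation*}
The normalised bump $\bar H:=H/\norm{H}{W^r_{1,\bw_r}(\UUd)}$ then lies in $\bW^r_{1,\bw_r}(\UUd)$, vanishes at all $n$ nodes (being supported in $R^*$), and satisfies $\int_{\UUd}\bar H\bw\,d\bx\gg 2^{-3r\xi/4}\asymp n^{-3r/4}(\log n)^{3r(d-1)/4}$; the lower bound then follows from~\eqref{Int_n>}.

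\textbf{Main obstacle.} The delicate point is the combinatorial bookkeeping behind the pigeonhole step, in particular verifying uniformly (independently of $n$, $d$ and the locations of the nodes) that each point of $\UUd$ belongs to only $O(1)$ of the boxes associated with $\bk$'s of $\ell_1$-norm $\xi$, so that the $n$ nodes occupy only $O(n)$ sub-rectangles and the count $\asymp 2^\xi\xi^{d-1}$ can be forced strictly larger by the choice of $C$. A secondary issue is the reduction from $\UUd=\RRd$ to the positive-orthant construction in the Laplace case, which I would handle by placing $R^*$ inside $\RRdp$ and using the reflection symmetry already exploited in Theorem~\ref{thm:Q_n-d=1-Laplace}.
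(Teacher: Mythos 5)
Your proposal is correct. The upper bound is exactly the paper's argument (Remark \ref{remark1} plus Lemma \ref{lemma:Q_xi-error} and the node count \eqref{|G(xi)|}), but your lower bound uses a genuinely different cell decomposition. The paper works with a single scale: it sets $M_n$ by $|\Gamma_d(M_n)|\ge n+1$, uses isotropic cubes $K_\bs$ of side $\delta=M_n^{-1/(2d)}$ indexed by $\bs\in\Gamma_d(M_n)=\{\bs\in\NNd:\prod_i s_i\le 2M_n,\ s_i\ge M_n^{1/d}\}$, which are pairwise disjoint cells of one grid, so the pigeonhole step is immediate, and then places an isotropic bump $g\,\bw^{-1}$ in a node-free cube; the constraints $s_i\ge M_n^{1/d}$ and $\prod_i s_i\le 2M_n$ play exactly the role your conditions $x_i\asymp\sqrt{N_i}$ and $|\bk|_1=\xi$ play in controlling the factors $x_i^{r/2-\nu_i}\delta^{-\nu_i}$ and the product of the per-coordinate norms. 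Your version instead tiles the dyadic shells $\prod_i(\sqrt{N_i},2\sqrt{N_i})$, $|\bk|_1=\xi$, by anisotropic rectangles of sides $N_i^{-1/2}$ and uses an anisotropic bump; the analytic core (the one-dimensional estimates $\int|h_i^{(l)}|w_r\ll N_i^{r/4+(l-1)/2}$ and $\int h_i w\asymp\delta_i$) is the same computation transposed to scale $N_i$, and both routes land on the same rate $n^{-3r/4}(\log n)^{3r(d-1)/4}$. What you flag as the main obstacle is not one: since $d$ is fixed, constants may depend on $d$, and a point $\bx$ satisfies $\sqrt{2^{k_i}}<x_i<2\sqrt{2^{k_i}}$ for at most two values of $k_i$ per coordinate, so each node occupies at most $2^d$ of the (shell, sub-rectangle) pairs, of which there are $2^\xi\binom{\xi+d-1}{d-1}\asymp 2^\xi\xi^{d-1}$; choosing this $\gg 2^d n$ forces a free cell. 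For $\UUd=\RRd$ your remark is also right but can be simplified: it suffices to run the whole construction inside $\RRdp$ (nodes outside the positive orthant occupy no cell, and the bump, supported where all $x_i\ge 1$, lies in $W^r_{1,\bw_r}(\RRd)$ with the same norm), no reflection argument is needed — this is precisely the paper's "analogous with modifications" case. The trade-off: the paper's single-parameter construction buys disjointness and a one-line normalization in terms of $M_n$, while yours mirrors the step-hyperbolic-cross geometry of the upper-bound grids and has the pleasant feature that the normalized integral $\asymp 2^{-3r\xi/4}$ is independent of which $\bk$ hosts the free cell, at the cost of the (easy) bounded-overlap bookkeeping.
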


\begin{proof}  
	We prove the theorem for the case $\UUd=\RRdp$. The case $\UUd=\RRd$ can be established in an analogous manner with some modifications.
Let us first prove the upper bound in \eqref{Int_n(W)}. We  will construct a quadrature of the form \eqref{Q_xi} which realizes it.	
In order to do this, we take the truncated Gaussian quadrature 	$Q^{\rm{TL}}_{j(m)}f$ defined in \eqref{G-Q_mf}. For every $k \in \NN_0$, let $m_k$ be the largest number such that $j(m_k) \le 2^k$.  Then we have 
$j(m_k) \asymp 2^k$. 
For the sequence of quadratures  $\brac{Q_{2^k}}_{k \in \NN_0}$ with 
$$
Q_{2^k}:= Q^{\rm{TL}}_{ j(m_k)} \in \Qq_{2^k},
$$
 from Theorem \ref{thm:Q_n-d=1} it follows that
\begin{equation*}\label{IntErrorR2}
	\bigg|\int_{\RR} f(x) w(x)\rd x - Q_{2^k}f\bigg| 
	\leq C 2^{- \frac{1}{2}r k} \|f\|_{W^r_{1,w_r}(\RR)}, 
	\ \  \ k \in \NN_0,  \ \ f \in W^r_{1,w_r}(\RR). 
\end{equation*}
This means that the assumption \eqref{Q_2^kf}--\eqref{IntErrorR} holds for $a = r_\lambda$. To prove the upper bound in \eqref{Int_n(W)} we approximate the integral 
$$
\int_{\RRdp} f(\bx) \bw(\bx)\rd \bx
$$
  by the quadrature $Q_{\xi}$ which is formed from the sequence $\brac{Q_{2^k}}_{k \in \NN_0}$.
For every $n \in \NN$, let $\xi_n$ be the largest number such that $|G(\xi_n)| \le n$.  Then the corresponding operator $Q_{\xi_n}$ defines a quadrature belonging to $\Qq_n$. From \eqref{|G(xi)|} it follows 
$$
\ 2^{\xi_n} \xi_n^{d - 1} \asymp |G(\xi_n)|  \asymp n.
$$ 
Hence we deduce the asymptotic equivalences
$$
\ 2^{-\xi_n}  \asymp n^{-1} (\log n)^{d-1}, \ \  \xi_n \asymp \log n,
$$
which together with Lemma \ref{lemma:Q_xi-error}  yield that
	\begin{equation*}\label{Q_xi-error}
		\begin{aligned}
		\Int_n(\bW^r_{1,\bw_r}(\RRdp)) 
		&\le 
	\sup_{f\in \bW^r_{1,\bw_r}(\RRdp)}	
	\bigg|\int_{\RRdp} f(\bx) \bw_r(\bx)\rd \bx  -  Q_{\xi_n}f\bigg| 
		\\
		&
		\leq 
		C 2^{- \frac{1}{2} r \xi_n} \xi_n^{d - 1}
		\asymp  	n^{-r/2} (\log n)^{(r/2 + 1)(d-1)}.
		\end{aligned}
	\end{equation*}
The upper bound in \eqref{Int_n(W)} is proven.

We now prove the lower bound  in \eqref{Int_n(W)} by using the inequality \eqref{Int_n>} in Lemma \ref{lemma:Int_n>}. For $M \ge 1$, we define the set 
	\begin{equation*}\label{Gamma-set}
\Gamma_d(M):= \brab{\bs \in \NNd: \, \prod_{i=1}^d s_i \le 2M, \ s_i \ge M^{1/d}, \ i=1,...,d}.
\end{equation*}
Then we have 
	\begin{equation}\label{|Gamma|}
	|\Gamma_d(M)| \asymp M (\log M)^{d-1}, \ \ M>1.
\end{equation}

For a given $n \in \NN$, let $\brab{\bxi_1,...,\bxi_n} \subset \RRdp$ be arbitrary  $n$ points. Denote by $M_n$  the smallest number such that $|\Gamma_d(M_n)| \ge n + 1$. We define the $d$-parallelepiped $K_\bs$ for $\bs \in \NNd_0$ of size 
$$
\delta:= M_n^{- \frac{1}{2d}}
$$
 by
	\begin{equation*}\label{K_bs}
K_\bs:= \prod_{i=1}^d K_{s_i}, \ \ K_{s_i}:= (\delta s_i, \delta s_{i-1}).
\end{equation*}
 Since  $|\Gamma_d(M_n)| > n$, there exists a multi-index 
$\bs \in \Gamma_d(M_n)$ such that $K_\bs$ does not contain any point from $\brab{\bxi_1,...,\bxi_n}$.

As in the proof of Theorem \ref{thm:Q_n-d=1}, we take a nonnegative function $\phi \in C^\infty_0([0,1])$, $\phi \not= 0$, and put
\begin{equation}\label{b_s-d}
	b_0:= \int_0^1 \phi(y) \rd y > 0, \quad b_s :=  \int_0^1 |\phi^{(s)}(y)| \rd y, \ s = 1,...,r.
\end{equation}		
For $i= 1,...,d$, we define the univariate functions $g_i$ in variable $x_i$ by
\begin{equation}\label{g_i}
	g_i(x_i):= 
	\begin{cases}
		\varphi(\delta^{-1}(x_i - \delta s_{i-1})), & \ \ x_i  \in K_{s_i}, \\
		0, & \ \ \text{otherwise}.	
	\end{cases}	
\end{equation}
Then the multivariate functions $g$ and $h$ on $\RRdp$ are defined by
	\begin{equation*}\label{g(bx)}
	g(\bx):= \prod_{i=1}^d g_i(x_i),
\end{equation*}
and  
\begin{equation}\label{h(bx)}
	h(\bx):= (g \bw^{-1})(\bx)= \prod_{i=1}^d g_i(x_i)w^{-1}(x_i)=:
	\prod_{i=1}^d h_i(x_i).
\end{equation}
Let us estimate the norm $\norm{h}{W^r_{1,\bw_r}(\RRdp)}$. For every $\bk \in \NNd_0$ with 
$0 \le |\bk|_\infty \le r$,  we prove the inequality
\begin{equation}\label{int-D^br}
	\int_{\RRdp}\big|(D^{\bk} h) \bw_r\big|(\bx) \rd \bx 
	\le  C   M_n^{(r - 1)/2}.
\end{equation}
We have
\begin{equation}\label{D^bk h}
	D^\bk h  = \prod_{i=1}^d h_i^{(k_i)}. 		
\end{equation}
Similarly to \eqref{h^(s)}--\eqref{h^(s)w(x)} we derive that for every $i = 1,...,d$,
\begin{equation*}\label{h^(s-i)w(x_i)}
	h_i^{(k_i)}(x_i) w_r(x_i)
	= \sum_{\nu_i=0}^{k_i} \binom{k_i}{\nu_i} g_i^{(k_i- \nu_i)}(x_i) (w^{-1})(x_i) x_i^{r/2-\nu_i}
		\prod_{\eta_i=0}^{\nu_i-1}(-\alpha -j).		
\end{equation*}
This together with \eqref{b_s-d}--\eqref{g_i} and the inequalities $s_i \ge M_n^{\frac{1}{d}}$  yields that
\begin{equation}\label{int-h^(s)w-d>1}
	\begin{aligned}
			\int_{\UU}\big|h_i^{(k_i)}(x_i) w_r(x_i)\big|\rd x_i
	&\le  C \max_{0\le \nu_i \le k_i} \ \max_{1 \le \eta_i \le \nu_i}  
	\int_{K_{s_i}}|x_i|^{r/2-\nu_i}\big|g^{(k_i-\nu_i)}(x_i)\big| \rd x_i 
	\\
	&\le  C \max_{0\le \nu_i \le k_i} (\delta s_i)^{r/2-\nu_i}
	\int_{K_{s_i}}\big|g^{(k_i-\nu_i)}(x_i)\big| \rd x_i 
	\\
	&\le C \max_{0\le \nu_i \le k_i} (\delta s_i)^{r/2-\nu_i}
	\delta^{- k_i + \nu_i +1} b_{k_i - \nu_i}
	\\
	&= C \delta^{- k_i +1} (\delta s_i)^{r/2}\max_{0\le \nu_i \le k_i}(\delta s_i)^{-\nu_i}\delta^{-\nu_i}. 
\end{aligned}
\end{equation}
Since $s_i \ge M_n^{\frac{1}{d}}$ and $\delta:= M_n^{-\frac{1}{2d}}$, we have that
$(\delta s_i)^{-\nu_i}\delta^{-\nu_i} \le 1$, and consequently, 
 the estimates \eqref{int-h^(s)w-d>1} and the inequalities  $0 \le k_i \le r$ and 
$\delta s_i \ge 1$ yield that
\begin{equation}\nonumber
		\int_{\UU}\big|h_i^{(k_i)}(x_i) w_r(x_i)\big|\rd x_i
		\le  C M_n^{\frac{r-1}{2d}}(\delta s_i)^{r/2}. 
\end{equation}
Hence, by \eqref{D^bk h} and the inequality $\prod_{i=1}^d s_i \le 2M$ we deduce 
\begin{equation}\nonumber
		\int_{\RRdp}|(D^\bk h) \bw_r|(\bx) \rd \bx 
		= \prod_{i=1}^d  \int_{\UU}\big|h^{(k_i)}(x_i)w_r(x_i)\big|\rd x_i
		 \le  C  M_n^{3r/4 - 1/2}.
\end{equation}
which completes the proof of the inequality \eqref{int-D^br}.
This  inequality means that $h \in W^r_{1,\bw_r}(\RRdp)$ and  
$$
\norm{h}{W^r_{1,\bw_r}(\RRdp)} \le  C   M_n^{3r/4 - 1/2}.
$$
 If  we define 
\begin{equation*}\label{h-bar}
	\bar{h}:=  C^{-1} M_n^{-({3r/4 - 1/2})}h,
\end{equation*}
then $\bar{h}$ is nonnegative, $\bar{h} \in \bW^r_{1,w_r}(\RR)$, $\sup \bar{h} \subset K_\bs$ and by \eqref{b_s-d}--\eqref{h(bx)},
\begin{equation}\nonumber
	\begin{aligned}	
		\int_{\RRdp}(\bar{h}w)(\bx) \rd \bx 
		&=  C^{-1} M_n^{-({3r/4 - 1/2})}\int_{\RRdp}(h w)(\bx) \rd \bx = \prod_{i=1}^d \int_{K_{s_i}} g_i(x_i) \rd x_i
		\\
		&
		=  C^{-1} M_n^{-({3r/4 - 1/2})}\brac{b_0 \delta}^d =  C' M_n^{- 3r/4}.
	\end{aligned}
\end{equation}
From the definition of $M_n$ and \eqref{|Gamma|} it follows that
$$M_n(\log M_n)^{d-1} \asymp |\Gamma(M_n)| \asymp n,$$ 
 which implies that $M_n^{-1} \asymp n^{-1} (\log n)^{d-1}$. This allows to receive the estimate
\begin{equation}\label{int-h-bar2}
		\int_{\RRdp}(\bar{h}w)(\bx) \rd \bx 
		=  C' M_n^{-3r/4}
		\gg
		n^{-3r/4} (\log n)^{3r(d-1)/4}.
\end{equation}
 Since the interval $K_\bs$ does not contain any point from the set $\brab{\bxi_1,...,\bxi_n}$ which has been arbitrarily chosen, we have 
 $$\bar{h}(\bxi_k) = 0, \ \ k = 1,...,n.
 $$
  Hence, by  the inequality \eqref{Int_n>} and \eqref{int-h-bar2} we have that
\begin{equation*}\label{int-h-bar3}
	\Int_n(\bW^r_{1,\bw_r}(\RRdp)) 
	\ge
	 \int_{\RRdp}\bar{h}(\bx) \bw_r(\bx) \rd \bx
	\gg  n^{-3r/4} (\log n)^{3r(d-1)/4}.
\end{equation*}
The lower bound in \eqref{Int_n(W)} is proven.
	\hfill
\end{proof}

\begin{remark}
	{\rm	
	For numerical integration of functions  in $W^r_{1,\bw_r}(\RRdp)$, the set of integration nodes $H(\xi)$ in the quadratures $Q_\xi$ which are formed from the truncated non-equidistant zeros of the Laguerre polynomials $p_m(w)$,
		is a  step hyperbolic corner  on the function domain $\RRdp$. Similarly, for numerical integration of functions  in $W^r_{1,\bw_r}(\RRd)$, the set of integration nodes $H(\xi)$ in the quadratures $Q_\xi$ which are formed from the truncated non-equidistant zeros of the Laguerre polynomials $p_m(w)$ and their symmetrically transformed points with respect to the origin, is a  step hyperbolic cross   on the function domain $\RRd$.
		This is a contrast to the  classical theory of  approximation of multivariate periodic functions having mixed smoothness for which the classical step hyperbolic crosses of integer points are on the frequency domain $\ZZd$ (see, e.g., \cite[Section 2.3]{DTU18B} for detail).  
		The set  $H(\xi)$ also completely differs from the classical Smolyak grids of fractional dyadic points on the function domain $[-1,1]^d$ 
		which are used in numerical integration for functions having a mixed smoothness (see, e.g., \cite[Section~5.3]{DTU18B} for detail). 		
}
\end{remark}

\medskip
\noindent
{\bf Acknowledgments:} 
This work is funded by the Vietnam National Foundation for Science and Technology Development (NAFOSTED) in the frame of the NAFOSTED-SNSF Joint Research Project under  
Grant IZVSZ2$_{ - }$229568.  
A part of this work was done when  the author was working at the Vietnam Institute for Advanced Study in Mathematics (VIASM). He would like to thank  the VIASM  for providing a fruitful research environment and working condition. 

\bibliographystyle{abbrv}
\bibliography{W-La-Integration}
\end{document}